\newtheorem{proposition}{Proposition}
\begin{document}
%
\title{On the Existence and Computation\\ of Minimum Attention Optimal
Control Laws}
%
%
%

\author{Pilhwa Lee and ~F.C. Park,~\IEEEmembership{Fellow}
\thanks{Pilhwa Lee (lead author) is with the Department of Mathematics,
Morgan State University, Baltimore, MD, USA,
pilhwa.lee@morgan.edu.}
\thanks{F.C. Park (corresponding author) is with the Department of
Mechanical Engineering, Seoul National University, Seoul, Korea,
fcp@snu.ac.kr.}
\thanks{Manuscript received Nov 22, 2019; revised January 20, 2021.}}

%
%

\markboth{IEEE Transactions on Automatic Control,~Vol.~xx, No.~xx, xx~xxx}%
{Shell \MakeLowercase{\textit{et al.}}: Bare Demo of IEEEtran.cls for IEEE Journals}
%



\maketitle

\begin{abstract}
One means of capturing the cost of control implementation of a general
nonlinear control system is via Brockett's minimum attention criterion,
defined as a multidimensional integral of the rate of change of the
control with respect to state and time.  Although shown to be important
in human motor control and robotics applications, a practical difficulty
with this criterion is that the existence of solutions is not always
assured; even when they exist, obtaining local solutions numerically is
difficult.  In this paper we prove that, for the class of controls
consisting of the sum of a time-varying feedforward term and a
time-varying feedback term linear in the state, existence of a suboptimal
solution can be guaranteed.  We also derive a provably convergent gradient
descent algorithm for obtaining a local solution, by appealing to the
Liouville equation representation of a nonlinear control system and adapting
iterative methods originally developed for boundary flow control.  Our
methodology is illustrated with a two degree-of-freedom planar robot example.
\end{abstract}

\begin{IEEEkeywords}
Minimum attention, optimal control, Liouville equation, boundary flow control
\end{IEEEkeywords}

%
\IEEEpeerreviewmaketitle


\section{Introduction}

Given a general nonlinear system $\dot{x} = f(x,u,t)$ evolving on the state
space ${\cal X}$, where $x \in {\cal X} \subseteq \mathbb{R}^n$ is the state
and $u \in {\cal U} \subseteq \mathbb{R}^m$ is the control, Brockett
\cite{Brockett} proposes the following functional---referred to as the
{\bf minimum attention criterion}---as a measure of the cost of
implementation of a control law $u = u(x,t)$:
\begin{equation}
J(u) = \frac{1}{2} \int_0^{T} \int_{\cal X}
 \left\| \frac{\partial u}{\partial x} \right\|^2
+ \left\| \frac{\partial u}{\partial t} \right\|^2 dx \: dt.
\label{eqn:attention_brockett}
\end{equation}
The basic premise behind this criterion is that the simplest
control law to implement is a constant input: the more frequently a control
changes, the more resources are required to implement it. Since control laws
typically depend on both the state and time, their cost of implementation can
be linked with the rate at which the control varies with respect to changes
in state -- the first term of (\ref{eqn:attention_brockett}) -- and changes
in time -- the second term of (\ref{eqn:attention_brockett}).

Some intriguing connections have been pointed out between the attention
criterion and existing theories for human motor control and learning
\cite{Jang, Todorov, Schall, Wolpert, Battaglia-mayer,Nakano, Kawato,
Wolpert2,Liu}.  For example, the minimum attention control paradigm
offers a generative explanatory model for human motor skill learning, in
which skills that are initially reliant on feedback gradually become more
and more open loop with practice, e.g., perfecting a golf swing.  At the same
time, the minimum attention principle can also explain how, in tasks that
rely on sensory feedback like catching a ball, the reliance on visual
feedback is initially low, but steadily increases and peaks immediately
before the ball is caught (we address this example further in our
experiments).  New insights and perspectives on the control of soft robots
have also been obtained via the minimum attention paradigm \cite{Santina}.

The main challenge in forging the minimum attention paradigm into
a practical control methodology is that solutions to the associated
multidimensional variational problem for~(\ref{eqn:attention_brockett})
are difficult to come by; even the existence of solutions is not always
guaranteed in the general case.

One way to make this problem more tractable is to restrict attention
to systems of the form $\dot{x} = f(x) + G(x)u$, with $G(x) \in
\mathbb{R}^{n \times m}$ given and $u$ of the form $u(x,t) =
K(t) x + v(t)$, where $K(t) \in \mathbb{R}^{m \times n}$
is a feedback gain matrix and $v(t) \in \mathbb{R}^m$ is a feedforward
term.  For this class of controls, some results on minimum attention
trajectories have been obtained for simple kinematic models of wheeled
mobile robots \cite{sohee_ark}.  Additional assumptions on the attention
functional, such as setting the terminal cost to be the mean-square state
error and weighting the attention functional probabilistically, make
it possible to approximate the problem as a linear quadratic regulator
(LQR), for which efficient solutions are readily obtainable
\cite{attention_brockett}, \cite{Jang}.  By and large, however, the minimum
attention paradigm has been effectively applied only in limited cases,
with only approximate solutions obtained under restricted settings.  

In this paper we expand the class of problems for which minimum attention
control laws can be meaningfully framed, in the sense of guaranteeing the
existence of solutions and obtaining them numerically. The main
contributions are (i) a proof of the existence of a minimum
attention control under the assumption of a control of the form $u(x,t)
= K(t) x + v(t)$ over a bounded state space, with $K(t)$ and $v(t)$
twice continuously differentiable, and (ii) an iterative gradient
descent algorithm for computing this minimum attention control law.

The key ideas that we draw upon are the Liouville partial differential
equation (PDE) representation for nonlinear systems as proposed by Brockett
\cite{Brockett2}, and the observation that the existence question for the
minimum attention problem is structurally similar to the well-posedness
problem for boundary flow control in the Navier-Stokes equations
\cite{Gunzburger}. Specifically, \cite{Gunzburger} proves the existence
of a solution minimizing the rate of change in the control with respect
to space and time for a two-dimensional boundary fluid flow, while tracking
prescribed fluid flow velocities governed by the Navier-Stokes equations.
We show that the minimum attention control problem posed in the Liouville 
PDE setting is, despite some differences (e.g., the state space for
the minimum attention problem is of arbitrary dimension $n$ rather than
two-dimensional, and a control over the entire state space is sought
rather than only on the boundary), structurally similar to the flow
control problem addressed in \cite{Gunzburger}.

Addressing these differences, we show that the one-shot iterative 
method developed in \cite{Gunzburger} can be generalized to the minimum
attention problem. Specifically, an iterative Monte Carlo-based gradient
descent method is developed to find local solutions to the minimum attention
problem, whose convergence to a local minimum can also be guaranteed via
ellipticity (specifically, boundedness of the second-order variation of
the associated Lagrangian).

The remainder of the paper is organized as follows.  Section
\ref{sec:formulation} formulates the minimum attention problem in a
Liouville PDE setting and makes explicit the structural similarities
and differences with the flow control problem of \cite{Gunzburger}.
Section~\ref{sec:proof_and_iterative_algorithm} proves the existence
of a solution for the class of controls considered in this paper,
and describes the iterative gradient descent method together with
a proof of its convergence to a local solution. Section
\ref{sec:example} provides examples of minimum attention control
laws obtained for a two-link planar robot.


\section{Problem Formulation}
\label{sec:formulation}

Given the first-order nonlinear system
\begin{equation}
\dot{x} = f(x,u,t),
\label{eqn:xdotfxu}
\end{equation}
where $x \in \mathcal{X} \subseteq \mathbb{R}^{n}$ is the state,
$u \in \mathcal{U} \subseteq \mathbb{R}^{m}$ is the control, and $f$ is
continuously differentiable with respect to $x$ and $u$, the standard 
optimal control problem for~(\ref{eqn:xdotfxu}) is usually formulated as
seeking the optimal $u^*$ that minimizes the criterion
\begin{equation}
\phi(x(T)) + \int_0^T L(x,u) \: dt,
\label{eqn:intLxudt}
\end{equation}
subject to boundary conditions on the initial and terminal states and
other equality and inequality constraints imposed on the state and control.
In practical situations one may also have to contend with a distribution of
initial states rather than a single initial state. In \cite{Brockett2} the
case is made that by replacing (\ref{eqn:xdotfxu}) by the Liouville partial
differential equation 
\begin{equation}
\frac{\partial \rho(x,t)}{\partial t}
= -\left< \frac{\partial}{\partial x}, f(x,u) \rho(x,t) \right>,
\label{eqn:Liouville_evol}
\end{equation}
where $\rho(x,t)$ is the probability density for $x$ with boundary
condition $\rho(x,0) = \rho_0(x)$, then not only can an initial distribution
of states be naturally included, but the criterion~(\ref{eqn:intLxudt}) can 
also be expanded to include, e.g., non-trajectory dependent terms such as
the last two terms in
\begin{eqnarray}
       \eta 
& = &  \int_{\cal X} \rho(x,T) \phi(x) \: dx 
     + \int_0^T \int_{\cal X} \rho(x,t) L(x,u) \: dx \: dt \nonumber \\
&   &+ \frac{1}{2} \int_{\cal X} \left\| \frac{\partial u}{\partial x} 
	\right\|^2 dx
     + \frac{1}{2} \int_0^T \left\| \frac{\partial u}{\partial t} 
       \right\|^2 dt.\label{eqn:rholdudxdudt}
\end{eqnarray}
There is in fact a natural stochastic optimal control interpretation to
optimizing (\ref{eqn:rholdudxdudt}) subject to~(\ref{eqn:Liouville_evol}):
Equation~(\ref{eqn:Liouville_evol})~can be identified with the Fokker-Planck
equation corresponding to the stochastic differential equation
$dx = f(x,u) \: dt + G \: dw$ (with $G$ assumed zero); given a probability
density $\rho(x,0) = \rho_0(x)$ at $t=0$, $\rho(x,t)$ then describes the
time evolution of the density for $x$.
	
Adopting the Liouville equation representation, the minimum attention optimal
control problem can now be formulated as follows. Given a desired terminal
density $\psi(x)$, define the attention functional (for convenience we 
henceforth omit the terminal $\phi(x(T))$ term from our functional) $\eta$ as
\begin{eqnarray}
      \eta 
& = & \int_{\mathcal X} \left(\rho(x,T)-\psi(x) \right)^2 dx 
      \nonumber \\
&   &+
	 \int_{\mathcal X} \int_0^T  
	\left(  \left\| \frac{\partial u}{\partial x} \right\|^2 
	      + \left\| \frac{ \partial u}{\partial t} \right\|^2 
        \right) \:dt \: dx,
\label{eqn:attn_cost}
\end{eqnarray}
in which the first and second terms of $\eta$ can be identified as the
terminal cost and running cost, respectively. In this setting the objective
is to seek the $u(x,t)$ that minimizes the attention functional
(\ref{eqn:attn_cost}) while driving the probability density governed
by (\ref{eqn:Liouville_evol}) from $\rho(x,0) = \rho_0(x)$ at $t=0$
to $\rho(x,T) = \psi(x)$ at $t=T$.

Equations~(\ref{eqn:Liouville_evol}) and (\ref{eqn:attn_cost}) can be
compared to the boundary velocity control problem in \cite{Gunzburger},
in which the Navier-Stokes equations and the associated cost functional
are as follows:
\begin{equation}
\left. \begin{array}{ll}
&\frac{\partial v}{\partial t} + (v \cdot \nabla) v - \nu \Delta v + \nabla p = 0~ {\rm in}~ (0,T) 
	\times \Omega, \label{Navier-Stokes}  \\
&\nabla \cdot v = 0~~~~{\rm in}~ (0,T) \times \Omega,  \\
&v = u~~~~ {\rm on}~ (0,T) \times \Gamma_c,  \\
&v = 0~~~~ {\rm on}~ (0,T) \times (\Gamma \backslash \Gamma_c),
\end{array} \right.
\end{equation}
where $v(x,t)$ is a two-dimensional flow over the time interval $[0,T]$ in
the bounded domain $\Omega$ with boundary $\Gamma$, $\Gamma_c \subset \Gamma$
denotes the region over which the control is valid, $p$ and $\nu$ 
respectively denote the hydrodynamic pressure and kinematic viscosity of
the fluid, and $\nabla$ and $\Delta$ respectively the gradient and Laplacian
operators. The initial velocity $v(x,0) = v_0(x)$ is given, and the
boundary velocity control, denoted by $u$, is required to satisfy the
compatibility conditions
\begin{equation}
\int_{\Gamma_c} \langle u, n \rangle \: dx = 0,
\label{boundary-constraint}
\end{equation}
where $n$ denotes the unit outward normal vector on $\Gamma$, and
\begin{equation}
\left. u \right|_{t=0} = \left. v_0 \right|_{\Gamma_c}.
\end{equation}
The associated objective is to find a boundary control $u$ and a velocity
field $v$ such that the cost functional 
\begin{eqnarray}
\mathcal{J}(v, u) & = & 
\int_0^T \int_{\Gamma_c} (\|u\|^2 + \alpha \left\|
\frac{\partial u}{\partial t} \right\|^2
+ \alpha \left\| \frac{\partial u}{\partial x} \right\|^2) \:dx\:dt \nonumber\\
& + & \int_0^T \int_{\Omega} \| v - V \|^2 dx \: dt
\end{eqnarray}
is minimized subject to $v$ and $u$ satisfying 
(\ref{Navier-Stokes})-(\ref{boundary-constraint}) with the prescribed
fluid velocity $V$ and the regularizing parameter $\alpha$.

Although the minimum attention problem seeks a control over the entire
domain rather than on the boundary, and the domain is also of arbitrary
dimension $n$ rather than two-dimensional, the one-shot method used in
\cite{Gunzburger} for the boundary flow control problem can be generalized
to prove both existence of a solution and convergence of an iterative
gradient descent algorithm for the minimum attention problem.

As a first step, the first-order necessary conditions for
$(\ref{eqn:attn_cost})$ subject to the state dynamics
$(\ref{eqn:Liouville_evol})$ can be derived explicitly in terms of
the augmented Lagrangian $\mathscr{L}$:
\begin{equation}
\mathscr{L} = \eta - \int_{\mathcal X} \int_0^T \lambda \left(
\frac{\partial{\rho(x,t)}}{\partial t} + \langle
\frac{\partial}{\partial x}, f(x,u) \rho(x,t) \rangle \right) \: dt \: dx,
\label{eqn:Lagrangian}
\end{equation}
where $\lambda$ denotes the Lagrange multiplier.  Then
$\frac{\delta{\mathscr{L}}}{\delta \lambda} = 0$ leads to the state
equations~(\ref{eqn:Liouville_evol}), while the adjoint equation
$\frac{\delta{\mathscr{L}}}{\delta \rho} = 0$ 
leads to
\begin{equation}
  \frac{\partial \lambda(x,t)}{\partial t} 
 +\left< \frac{\partial \lambda(x,t)}{\partial x}, f(x,u) \right> = 0,
\label{eqn:advection}
\end{equation}
subject to the boundary condition $\lambda = \rho-\psi$ at $t=T$.  Finally,
the optimality condition $\frac{\delta{\mathscr{L}}}{\delta u}=0$ leads to
\begin{equation}
\Delta u + \frac{\partial^2 u}{\partial t^2}
= \frac{\partial f}{\partial u}^T 
  \frac{\partial \lambda}{\partial x} \rho.
\label{eqn:optimality_eq}
\end{equation}
Equations~(\ref{eqn:Liouville_evol}), (\ref{eqn:advection}),
(\ref{eqn:optimality_eq}) together constitute the first-order necessary
conditions for the optimal solution $(\rho, u, \lambda)$.  Since $\lambda$
is conserved along the direction of advection $f$, the Lagrange multiplier
$\lambda(x(t), t)$ along solutions to (\ref{eqn:xdotfxu}) can be expressed
in terms of the terminal densities $\rho$ and $\psi$ as
\begin{eqnarray}
      \lambda(x(t),t) 
& = & \lambda \left( x(t)+\int_t^T f \: dt, T \right) \nonumber \\
& = & \rho(x(t)+\int_t^T f \: dt,T)-\psi(x(t)+\int_t^T f \: dt) \nonumber \\
& = & \rho(x(T),T) - \psi(x(T)) \label{formula:lambda}.
\end{eqnarray}
We make use of these first-order necessary conditions in the next section.


\section{Main Result}
\label{sec:proof_and_iterative_algorithm}

We now assume that the control $u$ is of the form
\begin{equation}
u = K(t)x + v(t),
\label{input_case2}
\end{equation}
where $K(t) \in \mathbb{R}^{m \times n}$ and $v(t) \in \mathbb{R}^{m}$ are
both $C^2([0, T])$, i.e., twice continuously differentiable on $[0,T]$.

\begin{proposition}
Given an initial density $\rho_0(x)$ and terminal density $\psi(x)$
over ${\cal X}$ bounded, there exists a sequence $(K_i, v_i)$, with 
$K_i(t) \in \mathbb{R}^{m \times n}$ and $v_i(t) \in \mathbb{R}^m$
both $C^2([0,T])$ and equicontinuous, that uniformly converges to
some $(\hat{K}, \hat{v})$ in the space $C^2([0,T])$, such that
$\hat{u}=\hat{K}(t)x + \hat{v}(t)$ minimizes the functional $\eta$ 
in (\ref{eqn:attn_cost}).
\end{proposition}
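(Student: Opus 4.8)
The plan is to prove this by the direct method of the calculus of variations, mirroring the existence argument of \cite{Gunzburger}. Write $u=K(t)x+v(t)$, so that $\partial u/\partial x=K(t)$ and $\partial u/\partial t=\dot K(t)x+\dot v(t)$; since $\mathcal X$ is bounded it carries finite moments $V=\int_{\mathcal X}dx>0$, $\mu=\int_{\mathcal X}x\,dx$ and $M=\int_{\mathcal X}xx^{T}dx$, with the centered second moment $M-\tfrac1V\mu\mu^{T}$ positive definite (which we may assume, since it fails only when $\mathcal X$ lies in a hyperplane). The proof will proceed in three stages: (i) a priori bounds making a minimizing sequence precompact; (ii) passage to the limit, via weak lower semicontinuity of the running cost and continuity of the terminal penalty, to obtain a minimizer; (iii) an elliptic-type regularity bootstrap showing the minimizer lies in $C^2$, after which mollification furnishes the asserted $C^2$-convergent sequence.

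For stage (i), fix a minimizing sequence $(K_i,v_i)$ of $C^2$ controls with $\eta(K_i,v_i)\le\inf\eta+1$. Carrying out the inner ($x$-)integral in (\ref{eqn:attn_cost}) gives $\int_{\mathcal X}\|\partial u/\partial x\|^2\,dx=V\,\|K(t)\|^2$, while $\int_{\mathcal X}\|\partial u/\partial t\|^2\,dx$ equals a nonnegative quadratic form in $(\dot K(t),\dot v(t))$ whose matrix --- acting on each row of $\dot K$ paired with the corresponding entry of $\dot v$ --- is $\left(\begin{smallmatrix}M&\mu\\\mu^{T}&V\end{smallmatrix}\right)$; by the positive-definiteness above this form is bounded below by $c_0\bigl(\|\dot K(t)\|^2+\|\dot v(t)\|^2\bigr)$ for some $c_0>0$. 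Hence the energy bound forces $(K_i)$ bounded in $H^1(0,T)$ and $(\dot v_i)$ bounded in $L^2(0,T)$. To bound $v_i$ itself --- equivalently, to render $\eta$ coercive in $v$ --- I would exploit the terminal penalty $\int_{\mathcal X}(\rho(x,T)-\psi)^2\,dx$: since $f\in C^1$ and $\mathcal X$ is bounded, $(x,t)\mapsto f(x,K(t)x+v(t),t)$ is a velocity field with uniformly bounded $x$-Lipschitz constant depending continuously on $(K,v)$, so by the stability of the continuity (Liouville) equation $\rho(\cdot,T)$ depends continuously on $(K,v)$ in $L^2(\mathcal X)$, and an unbounded $v_i$ would push the terminal penalty past $\inf\eta+1$ --- the sole exception being the degenerate case in which $f$, hence $\eta$, is independent of $v$, where one simply takes $v_i\equiv 0$. \textbf{This coercivity-in-$v$ estimate, together with the transport-stability fact it relies on, is where the real work lies}: it is the analogue, for the minimum attention problem, of the a priori bounds underpinning the boundary-flow-control result of \cite{Gunzburger}, and is the step that genuinely uses the boundedness of $\mathcal X$. (If the control set $\mathcal U$ is itself bounded, the pointwise bound on $(K_i,v_i)$ is of course immediate.)

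For stage (ii), the bound in $H^1(0,T)$ together with the embedding $H^1(0,T)\hookrightarrow C^{0,1/2}([0,T])$ makes $(K_i,v_i)$ uniformly bounded and equicontinuous, so Arzel\`a--Ascoli yields a uniformly convergent subsequence $K_i\to\hat K$, $v_i\to\hat v$, with $\dot K_i\rightharpoonup\dot{\hat K}$ and $\dot v_i\rightharpoonup\dot{\hat v}$ in $L^2$ along a further subsequence. The running cost, being the time integral of a nonnegative continuous jointly convex integrand in $(K,\dot K,\dot v)$, is weakly lower semicontinuous; the terminal penalty is in fact continuous along the sequence, since uniform convergence of $K_i(t)x+v_i(t)$ on the compact set $\mathcal X\times[0,T]$ gives uniform (indeed $C^1$, as $f\in C^1$) convergence of the velocity fields, hence of the flow maps by Gr\"onwall and of $\rho_i(\cdot,T)$ in $L^2(\mathcal X)$ (Liouville's formula bounding the Jacobians above and below). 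Therefore $\eta(\hat K,\hat v)\le\liminf_i\eta(K_i,v_i)=\inf\eta$; granting the $C^2$ regularity proved in stage (iii), $\hat u=\hat K(t)x+\hat v(t)$ is admissible and hence a minimizer of $\eta$.

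For stage (iii), since $\hat u$ is affine in $x$ we have $\Delta\hat u\equiv 0$, so restricting the optimality condition (\ref{eqn:optimality_eq}) to variations $\delta u=\delta K(t)x+\delta v(t)$ and integrating by parts in $t$ yields a coupled second-order system, linear in $(\ddot{\hat K},\ddot{\hat v})$ with principal-part matrix $\left(\begin{smallmatrix}M&\mu\\\mu^{T}&V\end{smallmatrix}\right)$ and right-hand side assembled from $\hat K$ and the zeroth- and first-order $x$-moments of $\frac{\partial f}{\partial u}^{T}\frac{\partial\lambda}{\partial x}\rho$, together with the natural endpoint conditions $\dot{\hat K}(0)=\dot{\hat K}(T)=0$ and $\dot{\hat v}(0)=\dot{\hat v}(T)=0$. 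Since $f\in C^1$, $\hat K$ is continuous, and $\rho,\lambda$ are continuous --- the former from (\ref{eqn:Liouville_evol}) and (\ref{eqn:advection}) with the now $x$-Lipschitz field $f(x,\hat u,t)$, the latter also from (\ref{formula:lambda}) --- the right-hand side is continuous in $t$; inverting $\left(\begin{smallmatrix}M&\mu\\\mu^{T}&V\end{smallmatrix}\right)$ then expresses $(\ddot{\hat K},\ddot{\hat v})$ as a continuous function of $t$, so $\hat K,\hat v\in C^2([0,T])$. Finally I would take $(K_i,v_i)$ to be mollifications of $(\hat K,\hat v)$ (convolution with a mollifier of width $1/i$ after a $C^2$ extension of $(\hat K,\hat v)$ past the endpoints): each is smooth, hence in $C^2$; the family converges to $(\hat K,\hat v)$ uniformly together with its first two derivatives, and is therefore equicontinuous; and $\eta(K_i,v_i)\to\eta(\hat K,\hat v)=\inf\eta$ by the continuity of $\eta$ used above. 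This is precisely the sequence asserted in the proposition.
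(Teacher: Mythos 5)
Your overall strategy (the direct method: coercivity, lower semicontinuity, regularity bootstrap) is considerably more ambitious than the paper's own argument, which simply restricts attention to minimizing sequences that are \emph{assumed} pointwise bounded and equicontinuous in $C^2([0,T])$, invokes Arzel\`a--Ascoli to extract a uniformly convergent subsequence, and asserts that the limit minimizes $\eta$; the paper proves neither coercivity nor lower semicontinuity, so your stages (ii) and (iii) supply content the paper does not have. However, your stage (i) has a genuine gap at precisely the step you flag as ``where the real work lies'': coercivity in $v$. The running cost depends only on $K$, $\dot K$, $\dot v$ and is invariant under constant shifts $v \mapsto v + c$, so any uniform bound on $v_i$ must come from the terminal penalty. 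But the terminal penalty cannot supply it in general: $\int_{\mathcal X}(\rho(\cdot,T)-\psi)^2\,dx \le 2\int_{\mathcal X}\rho(\cdot,T)^2\,dx + 2\int_{\mathcal X}\psi^2\,dx$, and if a large $v_i$ simply drives the mass of $\rho_i$ out of the bounded window $\mathcal X$ by time $T$, the penalty tends to the fixed constant $\int_{\mathcal X}\psi^2\,dx$, which need not exceed $\inf\eta+1$. Since $\|v_i(t)-v_i(0)\|\le \sqrt{T}\,\|\dot v_i\|_{L^2}$ is already controlled by the running cost, the uncontrolled quantity is exactly the constant mode $v_i(0)$, and your transport-stability observation (continuous dependence of $\rho(\cdot,T)$ on $(K,v)$) gives continuity, not coercivity; ruling out $v_i(0)\to\infty$ along a minimizing sequence requires additional hypotheses (a bounded control set $\mathcal U$, saturation of $f$ in $u$, or an a priori relation between $\inf\eta$ and $\int\psi^2$) that neither you nor the paper states.

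Two secondary points, also glossed (though equally glossed by the paper): your stage (iii) bootstrap needs $\partial\lambda/\partial x$ on the right-hand side, hence $C^1$ regularity of $\rho_0$ and $\psi$ (not just boundedness), and needs the $H^1$-weak minimizer to satisfy the strong Euler--Lagrange system with the natural boundary conditions before the constant-coefficient inversion applies; and the flow of $\dot x=f$ may exit the bounded $\mathcal X$, so the Liouville problem on $\mathcal X$ needs a boundary/mass-escape convention before the $L^2$-continuity of $\rho(\cdot,T)$ in stage (ii) is meaningful. In short: your route is the right one and strictly stronger than the paper's (which obtains only $C^0$ compactness from Arzel\`a--Ascoli while claiming $C^2$ convergence, and never verifies that the limit attains the infimum), but as written the existence claim still hinges on the unproven, and in general false-without-extra-assumptions, coercivity-in-$v$ estimate.
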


\noindent
\begin{proof}
By continuity $K$ and $v$ are both bounded. $\eta$ is also bounded from
above and below, i.e., $\eta_{\rm m} \leq \eta \leq \eta_{\rm M}$ where 
\begin{eqnarray}
\eta_{\rm m} & = & \inf_{K, v \in C^2([0,T])} \eta(K, v),\\
\eta_{\rm M} & = & \sup_{K, v \in C^2([0,T])} \eta(K, v).
\end{eqnarray}
Let $K_n$ and $v_n$ be minimizing sequences for $\eta$, i.e.,
\begin{equation}
\lim_{n \rightarrow \infty} \eta(K_n, v_n) = \eta_{\rm m},
\end{equation}
equicontinuous in the class of $C^2([0,T])$. Since both $K_n$ and
$v_n$ are pointwise bounded and equicontinuous, there exists a
subsequence $(K_m, v_m)$ in $C^2([0,T])$ uniformly converging to
$(\hat{K}, \hat{v})$ \cite{Rudin}. ($\hat{K}$, $\hat{v}$) and the
corresponding $\hat{u}$ minimize the functional $\eta$ of
(\ref{eqn:attn_cost}).
\end{proof}


\subsection*{Iterative Algorithm}
\label{sec:algorithm}

Having established the existence of a solution, we now develop an iterative
gradient descent algorithm (Algorithm \ref{algorithm1}) to solve the
first-order necessary conditions for the optimal $(K(t), v(t))$.  The control
$u$ in the form of (\ref{input_case2}) is substituted into the optimality
condition (\ref{eqn:optimality_eq}). Keeping in mind that solutions obey
(\ref{eqn:xdotfxu}), the explicit equation to be solved is, after
some calculation,
\begin{eqnarray}
(\ddot{K} + K B \dot{K}) x(t) = \rho(x(t), t) B^T
\frac{\partial \lambda(x(t), t)}{\partial x}  \nonumber \\ 
-\ddot{v} - 2\dot{K}f
- K( Af  + BKf + B\dot{v}),
\label{case2_eq3}
\end{eqnarray}
where $A = \frac{\partial f}{\partial x}, B = \frac{\partial f}{\partial u}$,
and $x(t)$ is governed by (\ref{eqn:xdotfxu}) with initial condition 
$x(0) = x_0$.  The associated $\rho(x,t)$ follows the Liouville equation
(\ref{eqn:Liouville_evol}) with initial condition $\rho(x,0) = \rho_0(x)$
while $\lambda(x(t),t)$ is obtained from (\ref{formula:lambda}).

Referring to Algorithm \ref{algorithm1}, for initialization we generate an
initial reference trajectory and control $(x^*,u^*)$ using, e.g., the linear
quadratic regulator method of \cite{Li}. The initial time-varying feedback
gain and feedforward $(K^{(0)}(t), v^{(0)}(t))$ are then derived from
the linearized optimal control perturbed from $(x^*,u^*)$ \cite{Jang}:
\begin{equation}
u(x,t) = u^*(t) - R^{-1} B(t)^{T} P(t)(x-x^*(t)),
\end{equation}
where $R \in \mathbb{R}^{m \times m}$ is a symmetric positive-definite
matrix, and $P(t)  \in \mathbb{R}^{n \times n}$ is generated
backward from the matrix Riccati differential equation
\begin{equation}
-\dot{P} = PA + A^TP - PBR^{-1}B^T P,
\end{equation}
with terminal boundary condition $P(T) = P_{\rm f}$.  $K^{(0)}$ and
$v^{(0)}$ are prescribed as follows:
\begin{eqnarray}
K^{(0)} &=& -R^{-1} B^{T} P, \label{K_initial}\\
v^{(0)} &=& u^* + R^{-1} B^{T} Px^*. \label{v_initial}
\end{eqnarray}
Given $K^{(0)}(t)$ and $v^{(0)}(t)$ from the optimal control, we solve for
the density distribution $\rho^{(0)}(x,t)$ and evaluate the cost $\eta^{(0)}$.

We note that our algorithm does not require the initial path to satisfy the
terminal endpoint constraint.  For initialization of the line search parameter
$\epsilon_{(m)}^{(n)}$, we set $\epsilon_{(0)}^{(0)} = \epsilon_0$, where
$n$ and $m$ respectively indicate the iterations for the outer loop control
update and inner loop line search.

\begin{algorithm}[b]
\caption{Iterative Gradient Algorithm for $K$ and $v$}
\begin{algorithmic}
\STATE{Initialize $K^{(0)}(t)$, $v^{(0)}(t)$, $\rho^{(0)}(x,t)$,
	and $\eta^{(0)}$}
\STATE{$\epsilon_{(0)}^{(0)} \gets \epsilon_0$}
\REPEAT
\STATE{Solve (\ref{eqn:xdotfxu}) for $x^{(n-1)}$ with
$u=K^{(n-1)}x^{(n-1)}+v^{(n-1)}$}
\REPEAT
\STATE{$\Delta v(t) \gets ( -\rho(x(t),t) B^T
	\frac{\partial \lambda(x(t),t)}{\partial x} + \ddot{v}$}
\STATE{$ ~~~~~~~ + 2 \dot{K}f + K( Af + BKf  + B\dot{v} ) )^{(n-1)}(t) $}
\STATE{$\Delta K(t) \gets (\ddot{K} + KB\dot{K})^{(n-1)}(t)$}
\STATE{$v^{(n)} \gets v^{(n-1)} -\epsilon_{(m)}^{(n)} \Delta v$}
\STATE{$K^{(n)} \gets K^{(n-1)} -\epsilon_{(m)}^{(n)} \Delta K$}
\STATE{Solve (\ref{eqn:Liouville_evol}) for $\rho^{(n)}$ with
$(K^{(n)}, v^{(n)})$ and evaluate $\eta^{(n)}_{(m)}$}
\STATE{ $\epsilon_{(m+1)}^{(n)} \gets 0.5 \epsilon_{(m)}^{(n)}$}
\STATE{$m \gets m+1$}
\UNTIL{$\eta^{n}_m \leq \eta^n_{(m-1)}$}
\STATE{$\epsilon_{(0)}^{(n+1)} \gets 1.5 \epsilon_{(m)}^{(n)}$, 
$\eta^{(n)} \gets \eta^{(n)}_{(m)}$; evaluate $\lambda^{(n)}$}
\STATE{$n \gets n+1$}
\UNTIL{$\frac{|\eta^{(n)} - \eta^{(n-1)}|}{\eta^{(n-1)}} \leq
\epsilon_{\rm tol}$}
\end{algorithmic} \label{algorithm1}
\end{algorithm}
%

We now show that if the second variation of $f$ with
respect to $u$ is degenerate, our algorithm converges to a local minimizer
for any initial control:
\begin{proposition} Algorithm \ref{algorithm1} converges to a local minimizer
for any initial control if the second variation of $f$ with respect
to $u$ is degenerate.
\end{proposition}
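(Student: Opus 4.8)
The plan is to regard Algorithm~\ref{algorithm1} as an $L^2$ gradient-descent iteration with backtracking (step-halving) line search applied to the \emph{reduced} attention functional $\eta(K,v)$, obtained from~(\ref{eqn:attn_cost}) by slaving $\rho$ to $(K,v)$ through the Liouville equation~(\ref{eqn:Liouville_evol}) and $\lambda$ to $(K,v)$ through the closed-form transport expression~(\ref{formula:lambda}). The increments $(\Delta K,\Delta v)$ built in the inner loop are the $L^2$ gradient of this reduced functional --- equivalently, the residual of the first-order optimality condition written in the explicit form~(\ref{case2_eq3}) --- so they vanish exactly at a stationary point and, for a sufficiently small step $\epsilon_{(m)}^{(n)}$, a step along $-(\Delta K,\Delta v)$ strictly decreases $\eta$. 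First I would record that $\eta$ is bounded below on $C^2([0,T])$, as already used in the proof of Proposition~1, so that the accepted costs $\{\eta^{(n)}\}$ form a monotone non-increasing sequence and converge, monotonicity being enforced by the inner-loop acceptance criterion once that loop is shown to terminate.

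The hypothesis is what makes the reduced functional \emph{elliptic with a bounded second variation} near every iterate. Expanding the second variation of the augmented Lagrangian~(\ref{eqn:Lagrangian}) with respect to the control along a perturbation $\delta u=\delta K\,x+\delta v$, the single contribution that is a priori sign-indefinite and not controlled by the magnitude of the current $(K,v)$ is the one proportional to $\rho\,\big\langle \partial\lambda/\partial x,\ (\partial^2 f/\partial u^2)(\delta u,\delta u)\big\rangle$; degeneracy of the second variation of $f$ with respect to $u$, i.e.\ $\partial^2 f/\partial u^2\equiv 0$, annihilates precisely that term. What remains is the positive quadratic form $\|\partial u/\partial x\|^2+\|\partial u/\partial t\|^2$, which along $\delta u$ equals $\|\delta K\|^2+\|\dot{\delta K}\,x+\dot{\delta v}\|^2$ up to a positive constant, together with lower-order couplings through $\rho(\cdot,T)$ and $\lambda$ that are bounded on the sublevel set $\{\,\eta\le\eta^{(0)}\,\}$. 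Hence the reduced Hessian obeys $\nabla^2\eta\preceq C\,I$ there, and is coercive, $\nabla^2\eta\succeq c\,I$ with $0<c\le C$, on a small enough neighbourhood of any stationary point. The upper bound acts as a descent lemma: once $\epsilon_{(m)}^{(n)}$ falls below a fixed threshold the acceptance test passes, so the inner loop halts after finitely many halvings, and combined with the halving mechanism it yields a sufficient decrease $\eta^{(n-1)}-\eta^{(n)}\ge\gamma\,\|(\Delta K^{(n)},\Delta v^{(n)})\|^2$ for some $\gamma>0$. Summing over $n$ and using boundedness below then forces $\|(\Delta K^{(n)},\Delta v^{(n)})\|\to 0$.

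To conclude, the iterates stay in a bounded, equicontinuous family of $C^2([0,T])$ controls --- by the same compactness argument as in Proposition~1, since $\{\,\eta\le\eta^{(0)}\,\}$ is bounded and the dynamics~(\ref{eqn:xdotfxu}) are smooth --- so an Arzel\`a--Ascoli extraction yields a subsequence $(K^{(n)},v^{(n)})\to(\hat K,\hat v)$ in $C^2([0,T])$ with vanishing reduced gradient. The local lower bound $\nabla^2\eta\succeq c\,I$ near $(\hat K,\hat v)$ promotes this stationary point to a strict local minimizer, and since $\eta$ is non-increasing over the entire run --- started from the \emph{arbitrary} initialization $(K^{(0)},v^{(0)})$ --- the whole sequence converges to it. I expect the main obstacle to be the two-sided Hessian bound: it requires showing that $(K,v)\mapsto\rho(\cdot,T)$ is $C^2$ with derivatives bounded uniformly on $\{\,\eta\le\eta^{(0)}\,\}$, which rests on Gronwall-type estimates for the flow of~(\ref{eqn:xdotfxu}), for its sensitivity $\partial x(T)/\partial x_0$, and for $\partial\lambda/\partial x$ via~(\ref{formula:lambda}), and then on verifying that the resulting bounded but possibly indefinite pieces are dominated by the attention quadratic form after shrinking the neighbourhood if necessary; the degeneracy assumption is exactly what removes the one term in that estimate which no such neighbourhood argument could absorb.
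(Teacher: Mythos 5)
Your proposal is correct in its central mechanism and it matches the paper's key idea: degeneracy of $\partial^2 f/\partial u^2$ annihilates the only sign-indefinite term in the second variation of the (augmented) cost, so what survives is a quadratic form that can be bounded above and below, and gradient descent then converges by standard elliptic-optimization theory. Where you genuinely diverge is in how the two-sided bound is obtained and how convergence is concluded. The paper works with the Lagrangian $\mathscr{L}$ of (\ref{eqn:Lagrangian}), assumes from the outset a local minimizer $\hat u$ whose ball $B$ contains the initial control, exploits the linear-in-$x$ structure of the variations (so the Laplacian term vanishes) to reduce the second variation to $\int \partial_t h_2\cdot\partial_t h_1$, and then bounds it via the explicit constants $c_1,c_2$ defined as the sup and inf of $\|(\dot K+KA)\delta x\|/\|K\delta x\|$, yielding the ellipticity estimates (\ref{second_variation_f_lower_bound})--(\ref{second_variation_f_upper_bound}) and convergence of the fixed-step iteration for $\epsilon_k<2c_2^2/c_1^2$ by citing Ciarlet. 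You instead treat the reduced functional $\eta(K,v)$ with $\rho$ and $\lambda$ slaved to the control, and run a descent-lemma/sufficient-decrease argument tailored to the actual backtracking line search of Algorithm~\ref{algorithm1}, followed by an Arzel\`a--Ascoli extraction on the sublevel set and a final upgrade of the limit to a local minimizer. Your route buys two things the paper does not deliver: it analyzes the adaptive step-size mechanism the algorithm actually uses (the paper's cited theorem is for a prescribed small step), and it does not presuppose the existence of a nearby minimizer; the paper's route buys brevity and explicit constants. One caution: your asserted local coercivity $\nabla^2\eta\succeq cI$ near an arbitrary stationary point is not automatic---the surviving couplings through $\rho(\cdot,T)$ and $\lambda$ are sign-indefinite, so a limit point could in principle be a saddle---but this is precisely the same leap the paper makes in taking $c_2>0$ for granted, so your proof is at the same level of rigor as the published one, and you are candid in flagging it as the main obstacle.
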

\begin{proof}
Following \cite{Gunzburger}, let ${\cal X}$ be a Hilbert space with norm
$\|\cdot\|$ and let $\mathscr{L}$ be a $C^2$ functional on ${\cal X}$.
Suppose $\hat{u}$ is a local minimizer for $\mathscr{L}$ close to the
initial control, and let $B$ be a ball in ${\cal X}$ centered at $\hat{u}$
with radius sufficiently large so as to include the initial control. 
From the optimality condition (\ref{eqn:optimality_eq}),
\begin{equation}
\frac{\delta \mathscr{L}}{\delta u} = -\int_{\mathcal{X}} \int_0^T
\left( \Delta u + \frac{\partial^2 u}{\partial t^2}
- (\frac{\partial f}{\partial u})^T \frac{\partial \lambda}{\partial x}
\rho \right) dt \: dx.
\end{equation}
If the second variation of $f$ with respect to $u$ is degenerate, for
all $\tilde{u} \in B$ and for all variations $h_1, \: h_2 \in X$ of
$\tilde{u}$, from (\ref{eqn:Lagrangian}) the second variation of
$\mathscr{L}$ can be written
\begin{equation}
\frac{\delta^2 \mathscr{L}}{\delta u^2}(\tilde{u})(h_1, h_2)
= -\int_{\cal X} \int_0^{T} (\Delta h_2 + 
\frac{\partial^2 h_2}{\partial t^2}) \cdot h_1 dt \: dx,
\end{equation}
where $\frac{\delta^2 \mathscr{L}}{\delta u^2}(\tilde{u})(h_1, h_2)$
is the bilinear form related to the second derivatives of $\mathscr{L}$.
Because $h_2$ is a variation of $\tilde{u}$ that is linear in $x$ as assumed 
in~(\ref{input_case2}), $\nabla^2 h_2$ is degenerate. Integrating by parts,
\begin{equation}
\int_0^{T} \frac{\partial^2 h_2}{\partial t^2} \cdot h_1 \: dt
= -\int_0^{T} \frac{\partial h_2}{\partial t} \cdot
\frac{\partial h_1}{\partial t} dt.
\end{equation}
Taking the time derivative of $h=K \: \delta x$ as a variation of $\tilde{u}$,
\begin{equation}
\frac{\partial h}{\partial t} = (\dot{K} + KA) \delta x.
\end{equation}
Define the constants $c_1$ and $c_2$ to be the supremum and infimum of
the ratio $\| \frac{\partial h}{\partial t} \| / \| h \|$, i.e.,
\begin{equation}
c_1 = \sup_{K \delta x \neq 0} \frac{\| (\dot{K} + KA) \delta x \|}
	{\| K \delta x \|}, ~~
c_2 = \inf_{K \delta x \neq 0} \frac{\| (\dot{K} + KA) \delta x \|}
	{\| K \delta x \|}.
\end{equation}
Because $K$, $\dot{K}$, and $A$ are all continuous and $\delta x$ is
bounded in $\cal X$, the existence of a positive finite $c_1$ and $c_2$
are guaranteed. The second variation of $\frac{\delta^2 \mathscr{L}}{\delta
u^2}(h_1, h_2)$ is therefore bounded from above and below, i.e.,
\begin{eqnarray}
\frac{\delta^2 \mathscr{L}}{\delta u^2}(\tilde{u})(h_1, h_2)
& \leq & c_1^2T ||{\cal X}|| ||h_1|| ||h_2||
\label{second_variation_f_lower_bound}\\
\frac{\delta^2 \mathscr{L}}{\delta u^2}(\tilde{u})(h_1, h_1) 
& \geq & c_2^2 T ||{\cal X}|| ||h_1||^2,
\label{second_variation_f_upper_bound}
\end{eqnarray}
satisfying ellipticity of the Lagrangian. It is now possible to choose a
positive set of parameters $\epsilon_k < (2c_2^2/c_1^2)$ such that the
iteration
\begin{equation}
u(k+1) = u(k) - \epsilon_k \nabla \mathscr{L}(u(k)), k = 0, 1, 2, ...
\end{equation}
converges to $\hat{u}$ for any initial iterate $u(0) \in  B$.  For further
related details see \cite{Ciarlet}.
\end{proof}


\section{Example}
\label{sec:example}
As an example, we derive the minimum attention control law for a two-link
planar robot arm. The dynamic equations are of the form
\begin{equation}
\tau= M(q) \ddot{q} + b(q, \dot{q})
\end{equation}
where $q \in \mathbb{R}^2$ represents the joint angle vector and $\dot{q} \in
\mathbb{R}^2$ its velocity, $\tau \in \mathbb{R}^2$ is the joint torque vector,
$M(q) \in \mathbb{R}^{2 \times 2}$ is the symmetric positive-definite mass
matrix, and $b(q, \dot{q}) \in \mathbb{R}^2$ consists of the Coriolis and
gravity terms.  Expressions for $M(q)$ and $b(q, \dot{q})$ can be found
in Appendix \ref{appendix1}.  Define the state vector $x=(q, \dot{q}) \in
\mathbb{R}^4$ and the control $u = \tau \in \mathbb{R}^2$. The dynamic
equations in state space form then become
\begin{eqnarray}
\dot{x}=f(x,u)= \left[ \begin{array}{c}
x_2\\
M^{-1}(x)\left( u-b(x_1,x_2) \right)
\end{array} \right].
\label{two-link-dynamics}
\end{eqnarray}
In this example, $u = K(t)x+v(t)$ is bounded with $K(t)$
and $v(t)$ continuous in $[0,T]$. Since $f = f(x,u)$ is continuously
differentiable, $(q, \dot{q})$ is bounded in its trajectory, and
and the second variation of $f$ with respect to $u$ is degenerate,
our minimum attention algorithm is applicable. 

We perform two sets of experiments. In the first experiment, the objective
is to drive the system from the initial state $x_{\rm init}=(0,0,0,0)$ to
the end-effector position-velocity $\phi_{\rm f}=(-0.26, 0.40, 0, 0)$ at
terminal time $T=0.5$ sec (the forward kinematics $\phi(x)$ is described
in Appendix \ref{appendix2}).  In the second experiment we drive the system
from the initial state $x_{\rm init}=(0.1,-0.1,0,0)$ to the end-effector
position-velocity $\phi_{\rm f}=(-0.32, 0.27, 0, 0)$ at terminal
time $T=0.5$ sec. The associated initial distribution $\rho_0(x)$
is prescribed to be centered at $x_{\rm init}$, with compact support
over eight grids using a smoothed Dirac delta function (see \cite{Peskin}).
The attention cost (\ref{eqn:attn_cost}) for this example is chosen to be
\begin{eqnarray}
\eta &=& \gamma \int_{\mathcal X} 
 \| \phi(x) - \phi_{\rm f} \|^2 \rho(x,T) \: dx \\
&+& \int_{\mathcal X} \int_0^T 
\left\| \frac{\partial u}{\partial x} \right\|^2
      + \left\| \frac{ \partial u}{\partial t} \right\|^2
\:dt \: dx. \nonumber
\label{eqn:attn_cost_ball_catching}
\end{eqnarray}
The adjoint boundary condition is $\lambda = \gamma \| \phi(x)
- \phi_{\rm f} \|^2$ at $t=T$, with $\lambda$ at $(x(t), t)$
derived similarly to (\ref{formula:lambda}):
\begin{equation}
\lambda(x(t),t) = \gamma\| \phi(x(T)) - \phi_{\rm f} \|^2.
\end{equation}
For our example we choose $\gamma = 10^{6}$.  The state space $\cal X$
is taken to be $[-5,5] \times [-5,5] \times [-300, 300] \times [-300,300]$,
discretized into 256 uniform intervals over each dimension, while the time
domain is discretized into 40 intervals. 

For Experiment 1, the initial $K^{(0)}(t)$ and $v^{(0)}(t)$ are obtained
from (\ref{K_initial}) and (\ref{v_initial}) with $P_{\rm f}
={\rm diag}(10^5,1,10^5,1)$ and $R={\rm diag}(0.4,1.3565)$.
These $K^{(0)}(t)$ and $v^{(0)}(t)$ are also used for Experiment
2 with the same set of parameters. For the derivatives of $K$ and $v$, 
second-order centered differencing is used except for the initial and
terminal points at $t=0$ and $t=T$, where first-order differencing
is employed. The second derivatives of $K$ and $v$ are similarly computed
from their first derivatives. We set $\epsilon_0=2.0 \times 10^{-3}$
and $\epsilon_{\rm tol}=5.0 \times 10^{-5}$ in the main algorithm.
Also, at each iteration of the main algorithm, for the evaluation of
$\rho^{(i)}(x,t)$ we use the basic Monte Carlo Algorithm
\ref{Monte_Carlo_method} \cite{Bird} (but other methods are also possible).

\begin{algorithm}[bt]
\caption{Monte Carlo evaluation of $\rho(x,t)$ given $(K,v)$}
\begin{algorithmic}
\STATE{trackcount $\gets$ 0}
\STATE{Generate grids with height $\Delta x$ and width $\Delta t$ at nodes
	$(x_i, t_i)$} 
\WHILE{trackcount $<$ trackmax}
\STATE{Sample $x_{\rm r} \in {\cal X}_c$ from a uniform distribution}
\STATE{Sample $r \in [0,1]$ from a uniform distribution}
\IF {$r < \rho_0(x_{\rm r}) \Delta x \Delta t$}
\STATE{Generate $x(t)$ from (\ref{eqn:xdotfxu}) with $x_0 = x_{\rm r}$
and $u=Kx+v$}
\STATE{Increment $N_{\rm pass}(x_i, t_i)$ when $x(t)$ passes through the grid
centered at $(x_i, t_i) \subseteq {\cal X} \times [0,T]$}
\STATE{trackcount $\gets$ trackcount + 1}
\ENDIF
\ENDWHILE
\STATE{$\rho(x_i,t_i) \gets N_{\rm pass}(x_i, t_i) / {\rm trackmax}$}
\end{algorithmic}
\label{Monte_Carlo_method}
\end{algorithm}

Figure 1(a) shows the results of Experiment 1, in which the initial
path in $q_1$-$q_2$ space is shown in red, and the minimum attention
path is shown in blue. Note that although the initial path does not
reach the target goal state, our algorithm converges to a final path
that reaches the target. Figure 1(b) shows snapshots of the robot
arm motion as it traverses the minimum attention path from the initial
to final pose. 
Figure 2 shows trajectories for the minimum attention control $u(t)$,
feedback gain $K(t)$, and feedforward term $v(t)$.  Consistent with
observations from the human motor control literature \cite{Jang,
Kawato, Wolpert2, Liu}, we also observe that the feedforward term 
initially dominates in the early part of the motion but eventually
approaches zero, while the feedback term initially starts out small
but rapidly dominates toward the latter part of the motion, when
feedback is used for precise positioning. The attention cost plotted
against the number of iterations is shown in Figure 3.

Results of Experiment 2 are shown in Figure 4, which shows the initial
and final paths in $q_1$-$q_2$ and the corresponding arm pose snapshots
for a different pair of initial and terminal states.  The optimal
trajectories obtained for $u(t)$, $K(t)$, and $v(t)$ also show
similar characteristics to those for Experiment 1, i.e., a smooth
transition from feedforward to feedback terms as the motion progresses
from the initial to final state.

\begin{figure}[hbt]
\begin{center}
$\begin{array}{c@{\hspace{0.05in}}c@{\hspace{0.05in}}c}
\includegraphics[angle=0, width=0.4\textwidth]{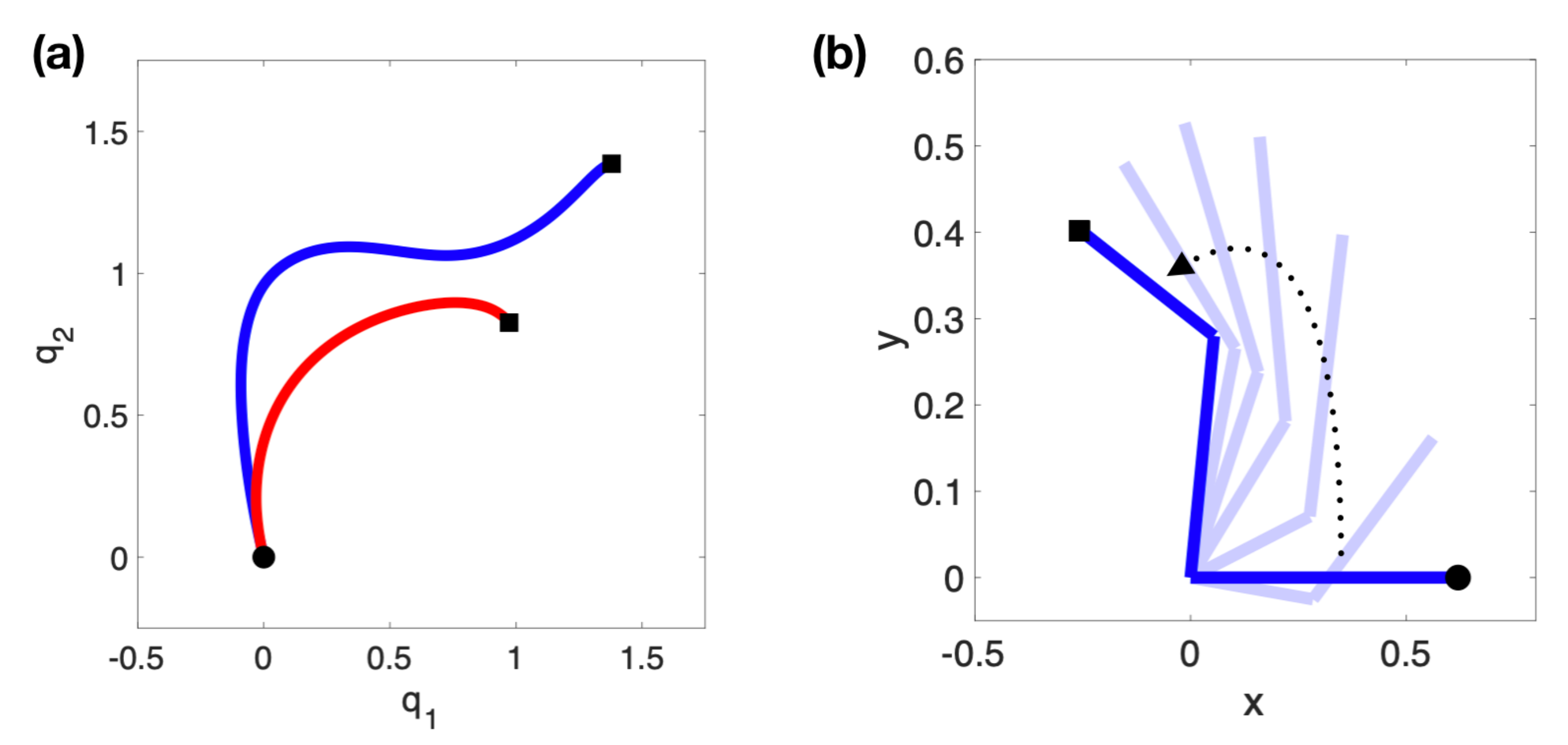}
\end{array}$
\end{center}
\caption{Experiment 1: the robot is driven from the initial configuration
$x_{\rm init}=(0,0,0,0)$ to the end-effector position-velocity
$\phi_{\rm f}=(-0.26, 0.40, 0, 0)$ at terminal time $T=0.5$ sec.
(a) Initialized (red), converged (blue) paths in $q_1$-$q_2$ with
the initial state (circle) and the final state (box).
(b) A sequence of arm pose snapshots for the final converged motion.
The initial and final hand positions are indicated by the 
circle and box, respectively.}
\label{figure1}
\end{figure}

\begin{figure}[hbt]
\begin{center}
$\begin{array}{c@{\hspace{0.05in}}c@{\hspace{0.05in}}c}
\includegraphics[angle=0, width=0.4\textwidth]{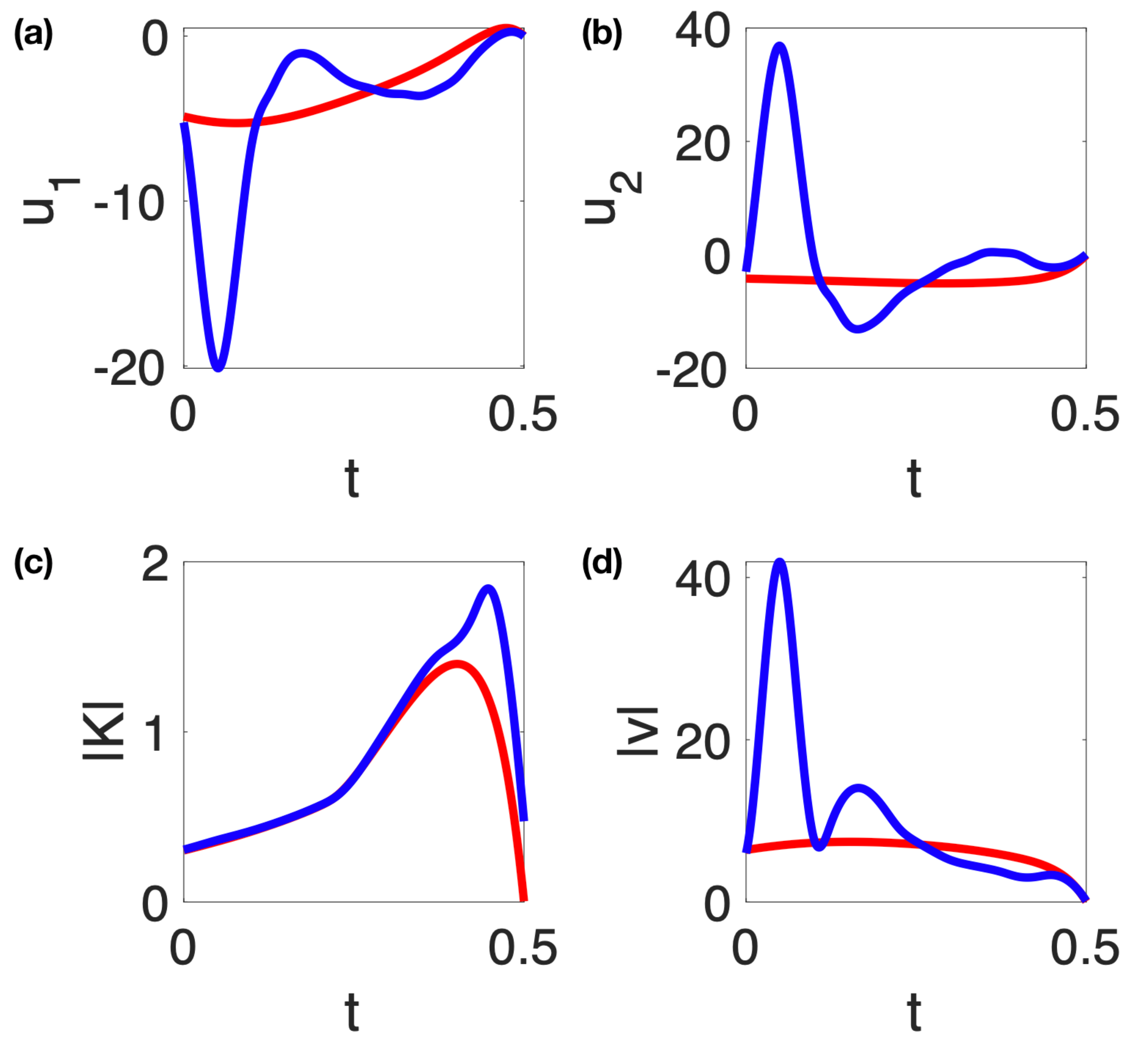}
\end{array}$
\end{center}
\caption{Experiment 1: Graphs of (a) $u_1(t)$; (b) $u_2(t)$;
(c) $\|K(t)\|$; (d) $\|v(t)\|$. Initial trajectories are indicated 
in red, while converged trajectories are indicated in blue.}
\label{figure2}
\end{figure}

\begin{figure}[hbt]
\begin{center}
$\begin{array}{c@{\hspace{0.05in}}c@{\hspace{0.05in}}c}
\includegraphics[angle=0, width=0.3\textwidth]{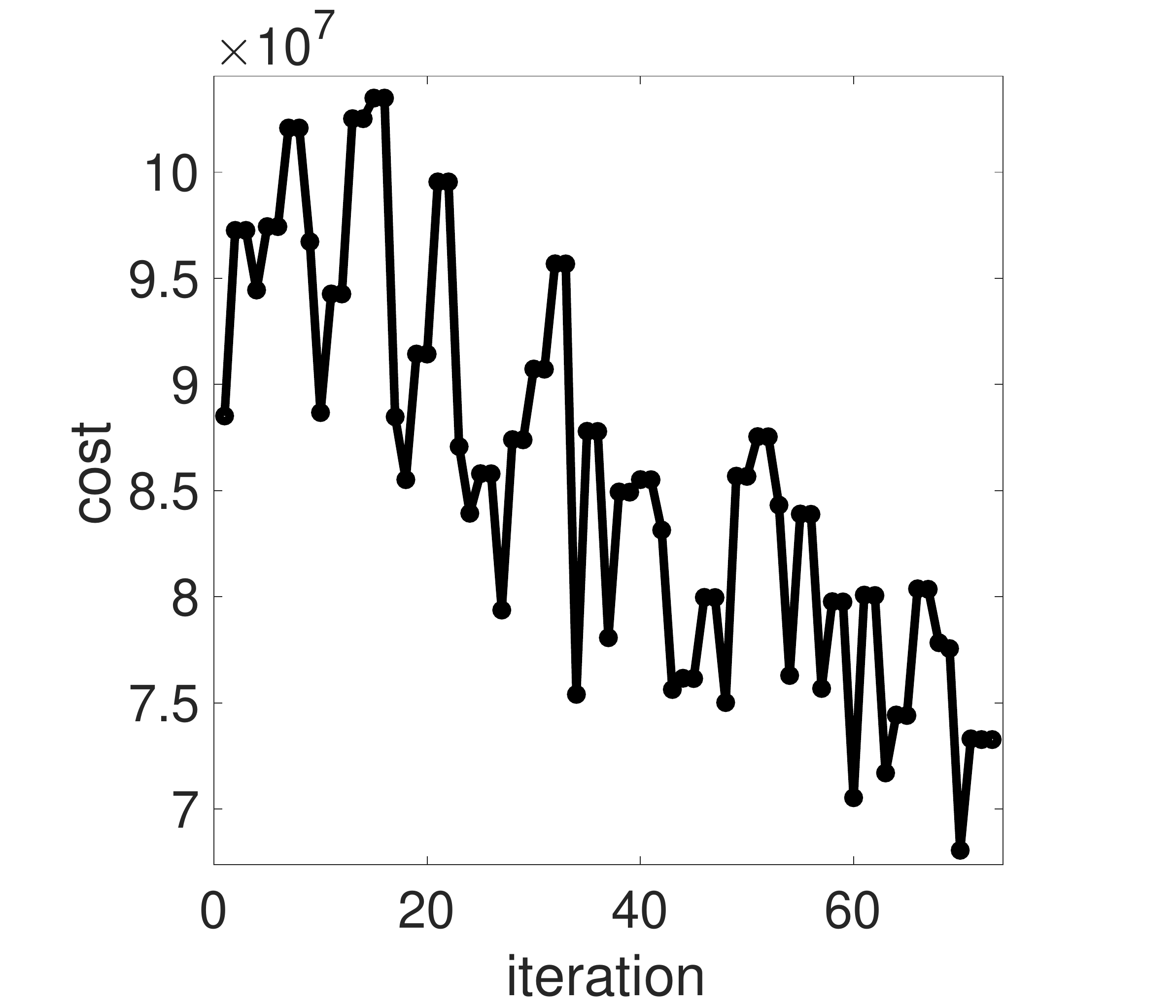}
\end{array}$
\end{center}
\caption{Experiment 1: Attention cost $\eta$ plotted against 
the number of iterations.}
\label{figure3}
\end{figure}

\begin{figure}[hbt]
\begin{center}
$\begin{array}{c@{\hspace{0.05in}}c@{\hspace{0.05in}}c}
\includegraphics[angle=0, width=0.4\textwidth]{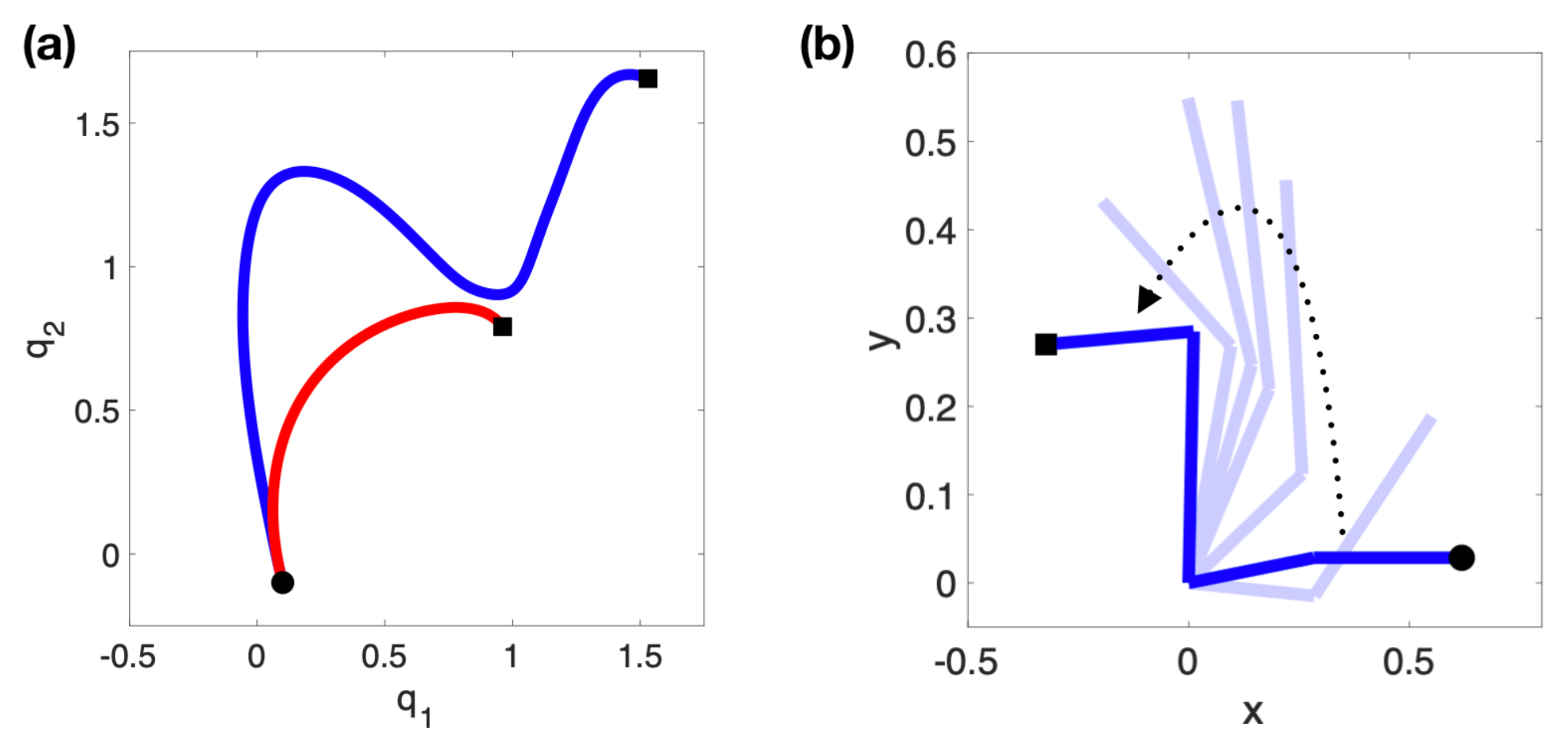}
\end{array}$
\end{center}
\caption{Experiment 2: the robot is driven from the initial configuration
$x_{\rm init}=(0.1,-0.1,0,0)$ to the end-effector position-velocity
$\phi_{\rm f}=(-0.32, 0.27, 0, 0)$ at terminal time $T=0.5$ sec. 
(a) Initialized (red), converged (blue) paths in $q_1$-$q_2$ with
the initial state (circle) and the final state (box).
(b) A sequence of arm pose snapshots for the final converged motion.
The initial and final hand positions are indicated by the 
circle and box, respectively.}
\label{figure4}
\end{figure}


\section{Conclusion}

Using the Liouville partial differential equation representation for
nonlinear systems \cite{Brockett2}, in this paper we have provided a proof
of the existence of a minimum attention control as formulated in
\cite{Brockett} under the assumption of a control of the form $u(x,t)
= K(t) x + v(t)$.  Exploiting the structural similarity between this
problem and a boundary flow control problem involving the Navier-Stokes
equations, we adopt the one-shot method of \cite{Gunzburger} to develop
an iterative gradient algorithm for the numerical solution of the
minimum attention control. An example involving a two-link planar
robot arm is used to illustrate the algorithm, with results that bear
close similarity to observed human arm reaching movements that transition
from feedforward to feedback control as the goal state is reached.  Our
algorithm, although not yet sufficiently efficient for real-time control
applications, reliably converges to a local minimum for a wide range of
initial conditions.  The possibility of using our algorithm to generate
training data for, e.g., a reinforcement learning-based method for motion
control \cite{Levine} is but one possible practical application.


\section*{Acknowledgments}
F.C.~Park was supported in part by SNU SRRC grant NRF-2016R1A5A1938472,
SNU BMRR grant DAPAUD190018ID, the Naver Labs AMBIDEX Project,
SNU BK21+ Program in Mechanical Engineering, SNU-IAMD, and the
SNU Institute for Engineering Research.


\appendices
\section{Dynamics of two-link robot arm}
\label{appendix1}
Elements of the mass matrix $M(q)$ and bias terms $b(q, \dot{q})$ are as
follows \cite{Nakano}:
\begin{eqnarray*}
m_{11} &=&  I_1 + I_2 + 2 M_2 L_1 S_2 \cos q_2 + M_2 L_1^2 \\
m_{22} &=&  I_2 \\
m_{12} &=&  I_2 + M_2 L_1 S_2 \cos q_2 \\
m_{21} &=&  m_{12} \\
b_1    &=& -M_2 L_1 S_2 (2 \dot{q}_1+\dot{q}_2) q_2 \sin q_2+B_{11}\dot{q}_1
           +B_{12} \dot{q}_2 \\
       & & +g[(M_1 S_1+M_2 L_1) \sin q_1 +M_1 S_2 \sin (q_1 + q_2)] \\
b_2    &=&  M_2 L_1 S_2 \dot{q}_1^2 \sin q_2+B_{22} 
	    \dot{q}_2+B_{21}\dot{q}_1 \\
       & & +g M_2 S_2 \sin(q_1 + q_2),
\end{eqnarray*}
where $g$ denotes gravity, and $I_i$, $M_i$, $L_i$, $S_i$ respectively
denote the inertia, mass, arm length, and center of mass of link $i$.

\section{Forward Kinematics}
\label{appendix2}
The forward kinematics map $\phi(x)$ maps the joint position-velocity pair
$x=(q, \dot{q})$ to the end-effector Cartesian position $(X,Y)$ and velocity
$(\dot{X}, \dot{Y})$ via
\begin{equation*}
\left( \begin{array}{c} X\\ Y\\ \dot{X} \\ \dot{Y} \end{array} \right) =
\left( \begin{array}{c}
 L_1 \cos q_1 + L_2 \cos (q_1 + q_2)\\
 L_1 \sin q_1 + L_2 \sin (q_1 + q_2)\\
-L_1 \dot{q}_1 \sin q_1 - L_2 (\dot{q}_1+\dot{q}_2) \sin(q_1 + q_2)\\
 L_1 \dot{q}_2 \cos q_1 + L_2 (\dot{q}_1+\dot{q}_2) \cos(q_1 + q_2)
\end{array} \right).
\end{equation*}

\ifCLASSOPTIONcaptionsoff
  \newpage
\fi

\end{document}